


\documentclass[letterpaper, 10 pt, conference]{ieeeconf}  
                                                          
\usepackage[utf8]{inputenc}
\usepackage[english]{babel}
\usepackage{mathrsfs} 
\usepackage{amsmath, amssymb, amsfonts}


\IEEEoverridecommandlockouts                              
\overrideIEEEmargins


\usepackage{xcolor}
\usepackage[pdftex]{graphicx}
  \graphicspath{{./imgs/}}
  \DeclareGraphicsExtensions{.pdf,.jpeg,.png}
  
\usepackage{epstopdf} 
\usepackage{epsfig}   


\usepackage{etoolbox}

\newtoggle{arXiv}
\toggletrue{arXiv}

\iftoggle{arXiv}{

}{


\usepackage{hyperref} 

\hypersetup{
colorlinks=true, linktocpage=true, pdfstartpage=1, pdfstartview=FitV,
breaklinks=true, pdfpagemode=UseNone, pageanchor=true, pdfpagemode=UseOutlines,
plainpages=false, bookmarksnumbered, bookmarksopen=true, bookmarksopenlevel=1,
hypertexnames=true, pdfhighlight=/O, urlcolor=red!50!black, linkcolor=red!50!black, citecolor=green!50!black,
%
%
pdftitle={},
pdfauthor={},
pdfsubject={},
pdfkeywords={},
pdfcreator={},
pdfproducer={LaTeX}
}
}


\usepackage[natbib=true,backend=biber,style=ieee,
firstinits=true,doi=false,eprint=false,isbn=false,url=false,texencoding=utf8,bibencoding=utf8]{biblatex}
\AtBeginBibliography{\footnotesize}
\IfFileExists{./Library.bib}{

 \addbibresource{./Library.bib}
 
}{

  \addbibresource[location=remote]{https://www.dropbox.com/s/r00o9lw76cksxvj/Library.bib?dl=1} 

}

\usepackage{todonotes}
\usepackage{enumerate}


\usepackage{amsthm}
\theoremstyle{plain}
\newtheorem{theorem}{Theorem}
\newtheorem{proposition}{Proposition}

\newtheorem{corollary}{Corollary}

\theoremstyle{definition}
\newtheorem{definition}{Definition}

\newtheorem{problem}{Problem}

\theoremstyle{remark}

\title{\LARGE \bf Decentralized Nonlinear Feedback Design with Separable Control
Contraction Metrics}

\author{Humberto Stein Shiromoto and Ian R. Manchester
\thanks{This work was supported by the Australian Research Council.}
\thanks{Both authors are with The Australian Centre for Field Robotics and the School of Aerospace Mechanical and Mechatronical Engineering, The University of Sydney, 2006 NSW, Australia. Corresponding author: 
        {humberto.shiromoto@ieee.org, orcid.org/0000-0002-0883-0231}}%
}

\begin{document}

\maketitle
\thispagestyle{empty}
\pagestyle{empty}

\begin{abstract}
 The problem under consideration is the synthesis of a distributed controller for a nonlinear network composed of input affine systems. The objective is to achieve exponential convergence of the solutions. To design such a feedback law, methods based on contraction theory are employed to render the controller-synthesis problem scalable and suitable to use distributed optimization. The nature of the proposed approach is constructive, because the computation of the desired feedback law is obtained by solving  a convex optimization problem. An example illustrates the proposed methodology.
\end{abstract}


\section{Introduction}\label{sec:introduction}

In many applications, nonlinear systems appear in form of interconnections of simpler elements, for instance, when employing techniques to model complex systems \cite{Willems2007}, and where coordination is important (see \cite{Nijmeijer2003,Pettersen2006} and references therein).

Techniques to design control algorithms for nonlinear systems depend on the structure of the differential equations. For nonlinear input-affine systems described by continuous equations typical approaches include the design of a control-Lyapunov function (CLF) \cite{Sontag1998}. From the constructive viewpoint, this approach may require to solve a non-convex optimization problem \cite{Rantzer:2001}. An alternative to CLFs is provided by the so-called control-contraction metrics (CCM) which provides a controller by solving an optimization problem that is convex \cite{Manchester2014a}. Contraction (a concept related to incremental stability) theory \cite{Angeli2002,Lohmiller1998} can be traced back to the work \cite{Lewis1949}. An advantage of contraction-based methods is that it allows to decouple the design of control algorithms from the \emph{a priori} knowledge of location of the attractor of the closed-loop system \cite{Aminzare2014}. This is particularly relevant in the case of attractiveness of an  invariant manifold which is a typical problem in network systems \cite{Wang2004,PhamSlotine2007,Andrieu2016}.

In this paper a network composed of nonlinear systems is considered. For this case, the controller designer must take into account also the influence of the network topology (see \cite{Aminzare2014a,Russo2013,Wang2004} for the contraction analysis of nonlinear network systems). Usual approaches involve the distributed design of the control algorithms employing input-output methods \cite{Scardovi2010} such as dissipativity \cite{Forni2013a}, and passivity \cite{Persis2014} which require an input-output property to hold on all agents of the network. From this point of view, scalable tools \cite{Rantzer2015,Tanaka2011} can be less restrictive allowing the decomposition (resp. composition) of the network into smaller (resp. bigger) components \cite{Dirr2015}. However, this approach is still not available for nonlinear systems.

The contribution of this paper is the use of contraction theory to extend these scalable methodologies for networks composed of nonlinear systems that are affine in the input. More specifically, the innovation of this work is twofold. An appropriate storage function (see the precise definition of this concept below) and a feedback law are designed for each node by solving a distributed convex-optimization problem. The result presented in this work is constructive, i.e., an algorithm to design the controller is provided. Moreover, the obtained controller will depend only on the states of the system to which it is designed for and on the states of adjacent systems. Although the feedback law is real-time optimized in a similar way to nonlinear model predictive control (\citep{Findeisen2003}), it has a simpler structure because it does not depend on dynamical constraints.

\textit{Outline.} The remainder of this paper is organized as follows. In Section~\ref{sec:Problem Formulation}, the problem under consideration is formulated and the preliminaries needed for the results are recalled. Section~\ref{sec:Sum-Separable Control-Contraction Metrics} presents the result that solves the problem in consideration. An example of the proposed approach is provided in Section~\ref{sec:illustration}. Section~\ref{sec:conclusion} collects final remarks and future directions of this work.

\section{Problem Formulation}\label{sec:Problem Formulation}

\textit{Notation.} Let $N\in\mathbb{N}$ be a constant value. The notation $\mathbb{N}_{[1,N]}$ stands for the set $\{i\in\mathbb{N}:1\leq i\leq N\}$. Let $c\in\mathbb{R}$ be a constant value. The notation $\mathbb{R}_{[1,c]}$ (resp. $\mathbb{R}_{\diamond c}$) stands for the set $\{x\in\mathbb{R}:1\leq x\leq c\}$ (resp.  $\{i\in\mathbb{R}:i\diamond c\}$, where $\diamond$ is a comparison operator, i.e., $\diamond\in\{<,\geq,=,\ \text{etc}\}$). A matrix $A\in\mathbb{R}^{n\times n}$ with zero elements except (possibly) those $a_{ii},\ldots,a_{nn}$ on the diagonal is denoted as $\mathbin{\mathtt{diag}}(a_{ii},\ldots,a_{nn})$. The notation $M\succ 0$ (resp. $M\succeq 0$) stands for $M$ being positive (resp. semi)definite.

A continuous function $f:\mathbf{S}\to\mathbb{R}$ defined in a subset $\mathbf{S}$ of $\mathbb{R}^k$ containing the origin is \emph{positive definite} if, for every $x\in\mathbf{S}_{\neq0}$, $f(x)>0$ and $f(0)=0$. The class of \emph{positive definite functions} is denoted as $\mathcal{P}$. It is \emph{proper} if it is radially unbounded. By $\mathcal{C}^s$ the class of \emph{$s$-times continuously differentiable functions} is denoted. In particular, the function $f$ is said to be \emph{smooth} if $s=\infty$. The notation $\mathcal{L}_{\mathrm{loc}}^\infty(\mathbb{R}_{\geq0},\mathbb{R}^m)$ stands for the class of functions $u:\mathbb{R}\to\mathbb{R}^m$ that are locally essentially bounded. Given differentiable functions $M:\mathbb{R}^n\to\mathbb{R}^{n\times n}$ and $f:\mathbb{R}^n\to\mathbb{R}^n$ the notation $\partial_fM$ stands for matrix with dimension $n\times n$ and with $(i,j)$ element given by $\frac{\partial m_{ij}}{\partial x}(x)f(x)$. The notation $f'$ stands for the total derivative of $f$.

Let $N>0$ be an integer, a \emph{graph} consists of a set of \emph{nodes} $\mathscr{V}\subset\mathbb{N}_{[1,N]}$ and a set of \emph{edges} $\mathscr{E}\subset\mathscr{V}\times\mathscr{V}$ and it is denoted by the pair $(\mathscr{V},\mathscr{E})=\mathscr{G}$. A node $i\in\mathscr{V}$ is said to be \emph{adjacent} to a node $j\in\mathscr{V}$ if $(i,j)\in\mathscr{E}$, the set of nodes that are adjacent to $j$ is defined as $\mathscr{N}(j)=\{i\in\mathscr{V}:i\neq j,(i,j)\in\mathscr{E}\}$. Given two nodes $i,j\in\mathscr{V}$, an ordered sequence of edges $\left\{(k,k+1)\right\}_{k=i}^{j-1}$ is said to be a \emph{path from the node $i$ to the node $j$}. A graph is said to be \emph{strongly connected} if, for every two nodes $i,j\in\mathscr{V}$, there exists a path connecting them. It is also said to be \emph{complete} is every node is adjacent to any other node, and it is \emph{incomplete} otherwise.

Let $N>1$ be a constant integer. Consider the network described by a strongly connected and incomplete graph with $N$ nodes. The time-evolution of the state of each node $i\in\mathbb{N}_{[1,N]}$ is given by the equation
	\begin{equation}\label{eq:subsystem}
		\dot{x}_i(t)=f_i(x_i(t),\breve{x}_i(t))+B_i(x_i(t))u_i(t),
	\end{equation}
and for every positive value of the \emph{time} $t$, the \emph{system state} $x_i(t)$ (resp. \emph{system input} $u_i(t)$) evolves in the Euclidean space $\mathbb{R}^{n_i}$ (resp. $\mathbb{R}^{m_i}$). Consider the systems indexed by $j_1,\ldots,j_k\in\mathscr{N}(i)$ that are adjacent to $i$, the vector $\breve{x}_i(t):=(x_{j_1}(t),\ldots,x_{j_k}(t))\in\mathbb{R}^{\breve{n}_i}$ is the \emph{interconnecting input}, where  $\breve{n}_i:=n_{j_1}+\cdots+n_{j_k}$. The \emph{system dynamics} is given by the smooth vector fields $f_i$ and $B_i$. From now on, the dependence of the system~\eqref{eq:subsystem} on the time variable $t$ will be omitted.

Employing a more compact notation, the network is given by the equation
\begin{equation}\label{eq:network}
	\dot{x}=f(x)+B(x)u,
\end{equation}
where $x:=(x_1,\ldots,x_N)^\top\in\mathbb{R}^n$ with $n=n_1+\cdots+n_N$, $u=(u_1,\ldots,u_N)^\top\in\mathbb{R}^m$  with $m=m_1+\cdots+m_N$, $f=(f_1,\ldots,f_N)^\top$ and $B=\mathbin{\mathtt{diag}}(B_1,\ldots,B_N)^\top$.

A function $u\in\mathcal{L}_{\mathrm{loc}}^\infty(\mathbb{R}_{\geq0},\mathbb{R}^m)$ is said to be an \emph{input signal or control for \eqref{eq:network}}. The solution $X$ to \eqref{eq:network} under the input signal $u^\ast$ with initial condition $x^\ast$ and computed at time $t$ is denoted as $X(t,x^\ast,u^\ast)$. Here, the $\ast$ stands for prescribed initial condition and input. From now on, solutions to \eqref{eq:network} are assumed to exist, for every positive time $t$.

The objective of this work is to design a distributed feedback law $k:\mathbb{R}^n\to\mathbb{R}^m$ for system \eqref{eq:network} such that, for every initial condition $x\neq x^\ast$, solutions $X(\cdot,x,k)$ to the closed-loop system converge exponentially to $X(\cdot,x^\ast,u^\ast)$.

A suitable framework to deal with the exponential convergence of pair of solutions is provided by the concept of exponential stabilizability of solutions introduced in \cite{Manchester2014a}, and recalled as follows.

\begin{definition}\label{def:GES}
	System \eqref{eq:network} is said to be \emph{universally exponentially stabilizable with rate $\lambda>0$} if, for every solution $X(\cdot,x^\ast,u^\ast)$ to \eqref{eq:network}, there exist a constant value $C>0$ and a feedback law $k$ such that, for every initial condition $x\in\mathbb{R}^n$, the corresponding solution $X(\cdot,x,k)$ to \eqref{eq:network} satisfies the inequality
	\begin{equation}\label{eq:contraction inequality}
		|X(t,x^\ast,u^\ast)-X(t,x,k)|\leq C e^{-\lambda t}|x^\ast-x|,
	\end{equation}
	for every $t\geq0$.
\end{definition}

Now that the concept of a exponential stabilizability has been recalled in Definition~\ref{def:GES}, the problem under consideration can be formulated as follows.

\begin{problem}\label{pb:formulation} For every solution $X(\cdot,x^\ast,u^\ast)$ to system \eqref{eq:network}. Find a feedback law $k:\mathbb{R}^n\to\mathbb{R}^m$ for \eqref{eq:network} such that
	\begin{enumerate}[{P}$_1$.]
		\item For every initial condition $x\in\mathbb{R}^n$, the respective issuing solution $X$ to the closed-loop system
	\begin{equation}\label{eq:network:CL}
		\dot{x}=f(x)+B(x)k(x)
	\end{equation}
	satisfies the inequality described in Equation~\eqref{eq:contraction inequality};
	
	\item For a given index $i\in\mathbb{N}_{[1,N]}$, the $i$-th component of the vector field $k$ depends only on $x_i$ and on the neighbors of the system $i$.
	\end{enumerate}
\end{problem}

\paragraph{Preliminaries} To solve Problem~\ref{pb:formulation}, it is sufficient to analyse the behavior of the solutions to the \emph{differential system associated with \eqref{eq:network}}, i.e., the time-variation of the solutions to system~\eqref{eq:network} issuing from piecewise smooth curves of $\mathbb{R}^{n\times m}$. This system is given by
\begin{equation}\label{eq:delta network}
	\dot{\delta}_x=A(x,u)\delta_ x + B(x)\delta_ u,
\end{equation}
where $\delta_x(\cdot)=(\delta_{x,1}(\cdot),\ldots,\delta_{x,N}(\cdot))$ (resp. $\delta_u(\cdot)$) is the tangent vector to the piecewise smooth curve connecting any pair of points $\underline{x},\overline{x}\in\mathbb{R}^n$ (resp. $\underline{u},\overline{u}\in\mathbb{R}^m$) \cite{Lohmiller1998}. The matrix $A(\cdot)\in\mathbb{R}^{n\times n}$ has components given as $A_{ij}=(\partial [f_i+b_iu_i])/(\partial x_j)$, for every $(i,j)\in\mathbb{N}_{[1,N]}\times \mathbb{N}_{[1,n]}$. The solution $\Delta_x$ to system ~\eqref{eq:delta network} under the input signal $\delta_u$  with initial condition $\delta_x\in\mathbb{R}^n$ and computed at time $t$ is denoted as $\Delta_x(t,\delta_x,\delta_u)$.

System \eqref{eq:delta network} describes the behaviour of solutions to \eqref{eq:network} in the space tangent to the point $(x,u)\in\mathbb{R}^n\times\mathbb{R}^m$. Asymptotic notions of solutions to system~\eqref{eq:delta network} are defined in a similar fashion as for linear time-varying systems (LTIS) (see \cite{Hespanha:2009} for more information on LTIS). Namely, given a suitable \emph{differential feedback law} $\delta_k:\mathbb{R}^n\to\mathbb{R}^m$ for system \eqref{eq:delta network}, the origin is a globally exponentially stable equilibrium for the closed-loop system 
\begin{equation}\label{eq:delta network:CL}
		\dot{\delta}_x=A(x,u)\delta_x-B(x)\delta_k(x,\delta_x),
	\end{equation}
	if, for every initial condition $\delta_x\in\mathbb{R}^n$, the limit $|\Delta_x(t,\delta_x,\delta_k)|\to0$ exponentially, as $t\to\infty$. If the origin is an exponentially stable equilibrium for system \eqref{eq:delta network:CL}, then the energy (see \eqref{eq:dist} below) of each piecewise smooth curve connecting two points in $\mathbb{R}^n$ remains constant and eventually tends to zero. 

As in the classical Lyapunov case, a sufficient condition for an equilibrium point to be globally exponentially stable for system~\eqref{eq:delta network} is provided by the analysis of an appropriate function along the solutions to systems~\eqref{eq:network} and \eqref{eq:delta network} (see \cite{Forni2014} and references therein).

\begin{definition}
	A smooth function $V:\mathbb{R}^n\times\mathbb{R}^n\to\mathbb{R}_{\geq0}$ is said to be a \emph{differential storage function for system \eqref{eq:network}} if there exist constant values $\underline{c}>0$ and $\overline{c}>0$ such that the inequality
	\begin{equation}\label{eq:Finsler-Lyapunov:candidate}
		\underline{c}|\delta_x|^2\leq V(x,\delta_x)\leq \overline{c}|\delta_x|^2
	\end{equation}
	holds, for every $(x,\delta_x)\in\mathbb{R}^n\times\mathbb{R}^n$. Given fixed controls $u$ and $\delta_u$ for, respectively, systems~\eqref{eq:network} and \eqref{eq:delta network}, the differential storage function $V$ is said to be a \emph{contraction metric for system \eqref{eq:network}} if there exists a constant value $\lambda>0$ such that the inequality
	\begin{align}
		\tfrac{dV}{dt}(X(t,x,u),\Delta_x(t,\delta_x,\delta_u))\leq  -2\lambda V&(x,\delta_x)\label{eq:Finsler-Lyapunov:inequality}
	\end{align}
	holds, for every $(x,\delta_x)\in\mathbb{R}^n\times\mathbb{R}^n$.
\end{definition}

In other words, Equation~\eqref{eq:Finsler-Lyapunov:candidate} states that the function $V$ is positive definite and proper with respect to $\delta_x$ in the space tangent to $x$. Equation~\eqref{eq:Finsler-Lyapunov:inequality} states that $V$ is strictly decreasing along the solutions of the system composed of \eqref{eq:network} and \eqref{eq:delta network}.

The existence of a contraction metric for system \eqref{eq:network} is a sufficient condition to conclude that \eqref{eq:network} is contracting (see \cite{Andrieu2016,Forni2014,Lohmiller1998} and references therein).

A contraction metric can be defined in terms of Riemannian metrics \cite[Definition 2.6]{Boothby1986} as follows. Let the constant values $\underline{m}>0$ and $\overline{m}>0$, and consider a smooth function $M:\mathbb{R}^n\to\mathbb{R}^{n\times n}$ with $M(\cdot)$ symmetric satisfying the inequality
\begin{equation}\label{eq:metric matrix}
	\underline{m}I\leq M(x)\leq \overline{m}I,
\end{equation}
for every $x\in\mathbb{R}^n$. The need for the bounds \eqref{eq:metric matrix} is related to the existence of suitable curves connecting any two points of $\mathbb{R}^n$. As in \cite{Manchester2014} a \emph{metric for system \eqref{eq:network}} is defined, for every $(x,\delta_x)\in\mathbb{R}^n\times\mathbb{R}^n$, by
\begin{equation}\label{eq:FL function}
	V(x,\delta_x)=\delta_x^\top M(x)\delta_x.
\end{equation}

Due to \eqref{eq:metric matrix}, the function $V$ satisfies the inequality \eqref{eq:Finsler-Lyapunov:candidate}. Moreover, whenever its time derivative computed along solutions to systems~\eqref{eq:network} and \eqref{eq:delta network} also satisfies the inequality \eqref{eq:Finsler-Lyapunov:inequality}, system \eqref{eq:network} is contractive. For a proof, the reader may check \cite[Theorems 5.7 and 5.33]{Isac2008}. 

A differentiable feedback law for system \eqref{eq:delta network} and a feedback law for system \eqref{eq:network} are obtained using the following notion of contraction metric, recalled from \cite{DBLP:journals/corr/ManchesterS15}.

\begin{definition}\label{def:existence of a metric}
	A \emph{strong control-contraction metric for system \eqref{eq:network}} is a contraction metric satisfying inequality \eqref{eq:metric matrix} and such that the condition
	\begin{subequations}\label{eq:Artstein-Sontag}
	\begin{equation}
		\delta_x\neq0\quad \text{ and }\quad\delta_x^\top M(x)B(x)=0
	\end{equation}
	implies that the inequality
	\begin{align}
		\delta_x^\top\left(\partial_f M(x)+\tfrac{\partial f}{\partial x}^\top (x)M(x)+M(x)\tfrac{\partial f}{\partial x}(x)\right)\delta_x\nonumber\\
		<-2\lambda \delta_x^\top M(x)&\delta_x\label{eq:Artstein-Sontag:inequality}
	\end{align}
	\end{subequations}
	holds. Furthermore, it also satisfies the identity
	\begin{equation}\label{eq:invariant vector}
		\partial_BM(x)+\tfrac{\partial B}{\partial x}^\top(x) M(x)+M(x)\tfrac{\partial B}{\partial x}(x)\equiv0.
	\end{equation}
\end{definition}

The following result, recalled from \cite[Theorem 1]{Manchester2014a} (see \cite{DBLP:journals/corr/ManchesterS15} for a proof), formalizes how a feedback law is obtained from a control-contraction metric for system \eqref{eq:network}.

\begin{proposition}\label{prop:existence of a differential controller}
	If there exists a strong control-contraction metric for system \eqref{eq:network}, then there exists feedback law $k$ for system \eqref{eq:network} that solves item P$_1$ of Problem~\ref{pb:formulation}. 
\end{proposition}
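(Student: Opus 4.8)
The plan is to argue in two stages. First, I would use the strong control-contraction metric to build a differential feedback $\delta_k$ that turns $V$ in \eqref{eq:FL function} into a contraction metric for the closed loop \eqref{eq:delta network:CL}; then I would integrate this pointwise prescription along minimising paths to obtain $k$ for \eqref{eq:network} and to lift the infinitesimal estimate to \eqref{eq:contraction inequality}. For Stage~1 I would fix $u,\delta_u$ and differentiate $V(x,\delta_x)=\delta_x^\top M(x)\delta_x$ along \eqref{eq:network} and \eqref{eq:delta network}. Writing the columns of $B(x)$ as $b_1(x),\dots,b_m(x)$ and $A(x,u)=\tfrac{\partial f}{\partial x}(x)+\sum_j u_j\tfrac{\partial b_j}{\partial x}(x)$, this gives
\begin{align*}
\tfrac{dV}{dt}&=\delta_x^\top\!\Big(\partial_f M+\tfrac{\partial f}{\partial x}^{\!\top}\!M+M\tfrac{\partial f}{\partial x}\Big)\delta_x\\
&\quad+\sum_j u_j\,\delta_x^\top\!\Big(\partial_{b_j}M+\tfrac{\partial b_j}{\partial x}^{\!\top}\!M+M\tfrac{\partial b_j}{\partial x}\Big)\delta_x+2\delta_x^\top M(x)B(x)\delta_u .
\end{align*}
By \eqref{eq:invariant vector}, read column-wise in $B$, the middle sum vanishes identically, so the right-hand side is independent of $u$. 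Condition \eqref{eq:Artstein-Sontag} is then precisely an Artstein--Sontag controllability condition for the control-affine scalar rate $\delta_u\mapsto\delta_x^\top(\partial_f M+\cdots)\delta_x+2\delta_x^\top M B\,\delta_u$ with ``Lyapunov function'' $V$: it states that in every direction where the input channel $\delta_x^\top M(x)B(x)$ vanishes (and $\delta_x\neq0$) the drift already satisfies the strict inequality $<-2\lambda V$. I would then invoke a min-norm selection (equivalently, Sontag's universal formula) to produce $\delta_k(x,\delta_x)$ that is continuous, positively homogeneous of degree one in $\delta_x$, vanishes at $\delta_x=0$, is smooth off $\delta_x=0$, and for which $-\delta_k$ makes $\tfrac{dV}{dt}\le-2\lambda V$ everywhere; i.e.\ \eqref{eq:Finsler-Lyapunov:inequality} holds for \eqref{eq:delta network:CL}. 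Integrating and using \eqref{eq:metric matrix} gives $|\Delta_x(t,\delta_x,\delta_k)|\le\sqrt{\overline{m}/\underline{m}}\,e^{-\lambda t}|\delta_x|$.

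For Stage~2, note that \eqref{eq:metric matrix} makes the length $\ell(\gamma)=\int_0^1\sqrt{V(\gamma(s),\gamma'(s))}\,ds$ induce a distance $d$ on $\mathbb{R}^n$ with $\sqrt{\underline{m}}\,|a-b|\le d(a,b)\le\sqrt{\overline{m}}\,|a-b|$, so $(\mathbb{R}^n,d)$ is complete and, by Hopf--Rinow, any two points are joined by a minimising geodesic. Given the nominal solution $X(\cdot,x^\ast,u^\ast)$, for a current state I would take a minimising geodesic $\gamma$ from $X(t,x^\ast,u^\ast)$ to that state and set $k=u^\ast+\int_0^1\delta_k(\gamma(s),\gamma'(s))\,ds$, following \cite{Manchester2014a}; this is a legitimate state feedback since the minimising geodesic depends measurably on its endpoints and $\delta_k$ is integrable along it. Comparing the geodesic energy $E(t)=\int_0^1 V(\gamma_t(s),\partial_s\gamma_t(s))\,ds$ with the energy of the competitor path obtained by transporting the time-$0$ geodesic forward under \eqref{eq:delta network:CL}, a first-variation argument together with the Stage-1 estimate yields $E(t)\le e^{-2\lambda t}E(0)$; since along constant-speed geodesics $d(\cdot)^2$ equals the minimal energy and hence is $\le E(\cdot)$, one gets $d\big(X(t,x^\ast,u^\ast),X(t,x,k)\big)\le e^{-\lambda t}d(x^\ast,x)$. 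Combining this with the metric equivalence gives \eqref{eq:contraction inequality} with $C=\sqrt{\overline{m}/\underline{m}}$, which is item P$_1$.

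The hard parts are, first, the regularity and $u$-independence of $\delta_k$: identity \eqref{eq:invariant vector} is exactly what annihilates the $u$-dependent terms so that $\delta_k$ can be taken as a function of $(x,\delta_x)$ alone, while \eqref{eq:Artstein-Sontag} is what guarantees a (min-norm) selection exists at all; and second, the rigorous upgrade from the infinitesimal contraction of Stage~1 to the finite-distance contraction of Stage~2, namely the $t$-differentiability of the minimising-geodesic energy and its comparison with the transported competitor path. Both are carried out in \cite{DBLP:journals/corr/ManchesterS15}; since the statement only asserts the existence of $k$, the min-norm construction plus this variational comparison are enough.
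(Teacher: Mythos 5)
Your two-stage argument (differential feedback from the strong CCM conditions, with \eqref{eq:invariant vector} killing the $u$-dependent terms and \eqref{eq:Artstein-Sontag} enabling a min-norm/universal-formula selection, followed by integration along minimising geodesics and an energy-decay comparison to get \eqref{eq:contraction inequality} with $C=\sqrt{\overline{m}/\underline{m}}$) is correct and is essentially the same route the paper takes: it states Proposition~\ref{prop:existence of a differential controller} by citing \cite{Manchester2014a,DBLP:journals/corr/ManchesterS15} for exactly this construction, and its own proof of Theorem~\ref{prop:distributed controller} repeats the same offline/online two-step structure. The only blemish is a sign-convention slip (having chosen $\delta_k$ so that $-\delta_k$ stabilises \eqref{eq:delta network:CL}, the integrated law should read $k=u^\ast-\int_0^1\delta_k(\gamma(s),\gamma'(s))\,ds$), which is immaterial to the existence claim.
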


Note that, because of the lack of structure of $M$, the feedback law provided by Proposition~\ref{prop:existence of a differential controller} does not have any particular structure, \emph{a priori}. Moreover, as remarked in \cite{Rantzer2015} and \cite{Tanaka2011}, the computation of a solution to the set of equations \eqref{eq:Artstein-Sontag} and \eqref{eq:invariant vector} may not be done in a distributed fashion. In comparison to existing methods to achieve exponential convergence of solution, the main advantage of the proposed approach is its formulation as a convex optimization problem. This motivates the approach of proposed in this paper.

\section{Sum-Separable Control-Contraction Metrics}\label{sec:Sum-Separable Control-Contraction Metrics}

\begin{definition}
	A strong control-contraction metric $V:\mathbb{R}^n\times\mathbb{R}^n\to\mathbb{R}_{\geq0}$ for system \eqref{eq:network} receives the adjective \emph{sum-separable} if, for every index $i\in\mathbb{N}_{[1,N]}$, there exist constant values $\underline{m}_i>0$ and $\overline{m}_i>0$, and a smooth function $M_i:\mathbb{R}^{n_i}\to\mathbb{R}^{n_i\times  n_i}$ with $M_i(\cdot)=M_i(\cdot)^\top$ satisfying the inequality $\underline{m}_i I_{n_i}\leq M_i(\cdot)\leq \overline{m}_i I_{n_i}$, where $I_{n_i}\in\mathbb{R}^{n_i\times n_i}$ is the identity matrix. Also, for every $(x,\delta_x)\in\mathbb{R}^n\times\mathbb{R}^n$, $V(x,\delta_x)=\delta_x^\top M(x)\delta_x=\sum_{i=1}^N\delta_{x,i}^\top M_i(x_i)\delta_{x,i}$.
\end{definition}

Although the requirement for $M$ to have a diagonal structure can be restrictive, for positive linear time-invariant systems the existence of a positive definite matrices $P=P^\top$ with diagonal structure used as Lyapunov functions is not conservative \cite{Rantzer2015}. This is an open problem that could be addressed for monotone systems. The main result of this paper is stated below. 

\begin{theorem}\label{prop:distributed controller}
	Assume that, for each index $i\in\mathbb{N}_{[1,N]}$, there exist smooth functions $W_i:\mathbb{R}^{n_i}\to\mathbb{R}^{n_i\times n_i}$ and $\rho_i:\mathbb{R}^{n_i+\breve{n}_i}\to\mathbb{R}$ such that, for the function $T:\mathbb{R}^n\to\mathbb{R}^{n\times n}$ with elements $i,j\in\mathbb{N}_{[1,N]}$ defined by
	\begin{align*}
		T_{ii}=&-\partial_{f_i}W_i+\tfrac{\partial f_i}{\partial x_i}W_i+W_i\tfrac{\partial f_i}{\partial x_i}^\top -\rho_i(\widetilde{x}_i)B_i B_i^\top+2\lambda W_i,
	\end{align*}
	where $\widetilde{x}_i:=(x_i,\breve{x}_i)\in\mathbb{R}^{n_i}\times\mathbb{R}^{\breve{n}_i}$, and
	\begin{equation*}
		T_{ij}=\left\{\begin{array}{rcl}
		  W_i\frac{\partial f_i}{\partial x_j}+\frac{\partial f_j}{\partial x_i}^\top W_j,&\text{if}&j\in\mathscr{N}(i)\\
		  0,&\multicolumn{2}{l}{\text{otherwise,}}	
		\end{array}\right.
	\end{equation*}
	the matrix inequality $T(x)\prec0$ holds, for every $x\in\mathbb{R}^n$, and the identity 
	\begin{equation}\label{eq:invariant vector:convex}
\partial_BW(x)-\tfrac{\partial B(x)}{\partial x}W(x)-W(x)\tfrac{\partial B(x)}{\partial x}^\top\equiv0
\end{equation}
	 holds. Then, there exists a sum-separable control-contraction metric and a feedback law for system \eqref{eq:network} that solves Problem~\ref{pb:formulation}.
\end{theorem}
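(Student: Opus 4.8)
The plan is to recover a sum-separable metric by inverting $W$ blockwise, to verify the conditions of Definition~\ref{def:existence of a metric} through the standard $M\mapsto W=M^{-1}$ congruence, and then to read the decentralized structure P$_2$ off the geodesic-integral formula for the feedback, exploiting that a block-diagonal metric is a product metric.

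First I would set $W(x):=\mathtt{diag}(W_1(x_1),\dots,W_N(x_N))$ and, under the standing requirement that each $W_i$ be uniformly positive definite and bounded (needed for the bounds in the sum-separable definition to be attainable by $M_i$), put $M_i(x_i):=W_i(x_i)^{-1}$ and $M(x):=\mathtt{diag}(M_1(x_1),\dots,M_N(x_N))$. The bounds $\underline m_iI_{n_i}\le M_i\le\overline m_iI_{n_i}$ follow from those on $W_i$, so $V(x,\delta_x):=\delta_x^\top M(x)\delta_x=\sum_{i=1}^N\delta_{x,i}^\top M_i(x_i)\delta_{x,i}$ is a candidate sum-separable metric obeying \eqref{eq:metric matrix}. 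Next I would check that $M$ is a strong control-contraction metric, which is precisely what $T\prec0$ and \eqref{eq:invariant vector:convex} encode in convex (in $W$) form. Writing $\widehat\rho(x):=\mathtt{diag}\big(\rho_1(\widetilde x_1)I_{m_1},\dots,\rho_N(\widetilde x_N)I_{m_N}\big)$, so that $B\widehat\rho B^\top=\mathtt{diag}(\rho_1B_1B_1^\top,\dots,\rho_NB_NB_N^\top)$, and using the directional-derivative analogue of $\tfrac{d}{dt}(W^{-1})=-W^{-1}\dot W W^{-1}$, namely $\partial_fM=-M(\partial_fW)M$, together with $MW=WM=I$ and the block-diagonality of $W$, $M$, $B$, one pre- and post-multiplies $T(x)$ by $M(x)$ and obtains
\[ M(x)T(x)M(x)=\partial_fM+\tfrac{\partial f}{\partial x}^\top M+M\tfrac{\partial f}{\partial x}+2\lambda M-MB\widehat\rho B^\top M . \]
Because congruence by the invertible $M(x)$ preserves sign-definiteness, $T(x)\prec0$ is equivalent to $\partial_fM+\tfrac{\partial f}{\partial x}^\top M+M\tfrac{\partial f}{\partial x}+2\lambda M-MB\widehat\rho B^\top M\prec0$, and restricting this inequality to $\delta_x\neq0$ with $\delta_x^\top M(x)B(x)=0$ annihilates the $\widehat\rho$-term and reproduces \eqref{eq:Artstein-Sontag:inequality}. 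Likewise, multiplying \eqref{eq:invariant vector:convex} on the left and right by $M$ and using $\partial_BM=-M(\partial_BW)M$ yields \eqref{eq:invariant vector}. Hence $M$ is a sum-separable strong control-contraction metric, and Proposition~\ref{prop:existence of a differential controller} then supplies a feedback $k$ for \eqref{eq:network} solving item P$_1$ of Problem~\ref{pb:formulation}.

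The remaining work is P$_2$. Following the construction behind Proposition~\ref{prop:existence of a differential controller} (see \cite{DBLP:journals/corr/ManchesterS15}), one valid differential feedback is $\delta_k(x,\delta_x)=\tfrac12\widehat\rho(x)B(x)^\top M(x)\delta_x$: invoking \eqref{eq:invariant vector} to discard the input-dependent terms of the closed-loop differential dynamics gives $\tfrac{dV}{dt}=\delta_x^\top\big(\partial_fM+\tfrac{\partial f}{\partial x}^\top M+M\tfrac{\partial f}{\partial x}-MB\widehat\rho B^\top M\big)\delta_x<-2\lambda V$, and the feedback is recovered by integrating along a minimizing geodesic, $k(x)=u^\ast-\int_0^1\delta_k\big(\gamma(s),\partial_s\gamma(s)\big)\,ds$, where $s\mapsto\gamma(s)$ is a minimizing geodesic of $(\mathbb{R}^n,M)$ from $x^\ast$ to $x$, which exists because \eqref{eq:metric matrix} makes the metric complete. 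The crucial observation is that $M=\mathtt{diag}(M_i(x_i))$ is a \emph{product} Riemannian metric, so (selecting the minimizing geodesic blockwise if it is not unique) $\gamma(s)=(\gamma_1(s),\dots,\gamma_N(s))$ with each $\gamma_i$ a minimizing geodesic of $(\mathbb{R}^{n_i},M_i)$ from $x_i^\ast$ to $x_i$; in particular $\gamma_i$ depends on $x_i$ only. Since $B$ and $\widehat\rho$ are block diagonal with blocks $B_i(x_i)$ and $\rho_i(\widetilde x_i)=\rho_i(x_i,\breve x_i)$, the $i$-th block of $\delta_k$ along $\gamma$ equals $\tfrac12\rho_i(\gamma_i,\breve\gamma_i)B_i(\gamma_i)^\top M_i(\gamma_i)\,\partial_s\gamma_i$, which involves only $\gamma_i$ and the subvector $\breve\gamma_i$ of $\gamma$ indexed by $\mathscr{N}(i)$; hence $k_i(x)=u_i^\ast-\tfrac12\int_0^1\rho_i(\gamma_i,\breve\gamma_i)B_i(\gamma_i)^\top M_i(\gamma_i)\,\partial_s\gamma_i\,ds$ depends only on $x_i$ and $\{x_j:j\in\mathscr{N}(i)\}$, which is exactly P$_2$.

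The linear-algebra manipulations in the second step (the $W^{-1}$ congruence and the cancellations turning $T\prec0$ into the strong-CCM inequality, and similarly for the invariance identity) are routine; the step that carries the paper's contribution, and which I expect to require the most care, is the last one---justifying that minimizing geodesics of the block-diagonal metric factor through the coordinate blocks, so that the geodesic components for node $i$ and its neighbours never depend on non-adjacent states, and that the resulting integral formula for $k$ therefore inherits the sparsity pattern of the graph $\mathscr{G}$. One should also confirm that the uniform bounds on the $W_i$ are genuinely in force, both so that \eqref{eq:metric matrix} holds and so that minimizing geodesics exist between arbitrary points of $\mathbb{R}^n$.
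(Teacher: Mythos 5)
Your proposal is correct and follows essentially the same route as the paper: the change of variables $M=W^{-1}$ with a congruence transformation turning $T\prec 0$ and \eqref{eq:invariant vector:convex} into the strong-CCM conditions \eqref{eq:Artstein-Sontag} and \eqref{eq:invariant vector}, followed by integration of the differential feedback along geodesics that decompose blockwise because the metric is block diagonal (the paper likewise invokes Hopf--Rinow per block to get existence and reads P$_2$ off the resulting componentwise integral with $\rho_i(\tilde c_i)$). Your explicit flagging of the uniform bounds on the $W_i$ (needed for \eqref{eq:metric matrix} and completeness) is a caveat the paper leaves implicit in the theorem statement, so it is a fair and consistent addition rather than a deviation.
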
 

\begin{proof}
	The proof of Theorem~\ref{prop:distributed controller} consists of two steps and it is based on \cite[Theorem 1]{Manchester2014a}. The first step is an \enquote{offline} search of a strong control-contraction metric for system~\eqref{eq:network} in terms of a convex optimization program. The second step is an \enquote{online} the integration of the obtained controller along appropriate curves connecting points of $\mathbb{R}^n$.
	
	{\itshape First step.} The offline computation of a control-contraction metric and a differentiable feedback law for system \eqref{eq:network} can be formulated in terms of a convex optimization programming problem as follows.
	
	Under the hypothesis of Theorem~\ref{prop:distributed controller}, two equations are satisfied. Namely, the inequality 
	\begin{align}
		\eta_x^\top\left(-\partial_f W(x)+\tfrac{\partial f}{\partial x}(x)W(x)+W(x)\tfrac{\partial f}{\partial x}^\top (x)\right)\eta_x\nonumber\\
		<-2\lambda \eta_x^\top W(x)\eta_x\label{eq:Artstein-Sontag:convex inequality}
	\end{align}	
which holds, whenever $\eta_x\neq0$ and $\eta_x^\top B(x)=0$, where $\eta_x\in\mathbb{R}^n$, and the identity \eqref{eq:invariant vector:convex}.
	
	For every $(x,\delta_x)\in\mathbb{R}^n\times\mathbb{R}^n$, let $\eta_x=M(x)\delta_x$ and define the function $W(x)=M^{-1}(x)$. This change of variable implies that the set of equations \eqref{eq:Artstein-Sontag} and the identity \eqref{eq:Artstein-Sontag:convex inequality} hold. Thus, $W^{-1}$ is a strong control-contraction metric for system \eqref{eq:network}. Consequently, the function 
\begin{equation}\label{eq:differential feedback law}
		\delta_k(x,\delta_x)=-\tfrac{\rho(x)}{2}B(x)^\top W(x)^{-1}\delta_x:=K(x)\delta_x
\end{equation}
defined, for every $(x,\delta_x)\in\mathbb{R}^n\times\mathbb{R}^n$, is a differential feedback law for system \eqref{eq:delta network} that renders the origin globally exponentially stable for the closed-loop system \eqref{eq:delta network:CL}.

Once the differential feedback law \eqref{eq:differential feedback law} has been obtained, the second step is the computation of a feeback law for system \eqref{eq:network}.

{\itshape Second Step.} The online integration of \eqref{eq:differential feedback law} along appropriate curves is defined as follows.

Consider two points $x^\ast,x\in\mathbb{R}^n$. Let $\Gamma(x^\ast,x)$ be the set of piecewise smooth curves $c:[0,1]\to\mathbb{R}^n$ connecting $x^\ast$ to $x$. A curve satisfying the optimization problem
\begin{align}\label{eq:dist}
	\inf_{c\in\Gamma(x^\ast,x)}e(c):=\inf_{c\in\Gamma(x^\ast,x)}\int_0^1 V(c(s),c'(s))\,ds\nonumber\\
	=\inf_{c\in\Gamma(x^\ast,x)}\int_0^1\sum_{i=1}^N c_i'(s)^\top M_i(c_i(s))c_i'(s)\,ds
\end{align}
is said to be a \emph{geodesic curve} while the function $e$ is said to be the \emph{energy} of the curve $c$.

Since $M$ is positive definite, the minimum of this equation corresponds to the minimum of each component $i\in\mathbb{N}_{[1,N]}$, i.e., a solution to the optimization problem
\begin{equation}\label{eq:dist:i}
	\inf_{c_i\in\Gamma(x_i^\ast,x_i)}e(c_i):=\mathbin{\mathtt{dist}}(x_i^\ast,x_i).
\end{equation}

	Due to the fact that, for each index $i\in\mathbb{N}_{[1,N]}$, the metric space $(\mathbb{R}^{n_i},\mathbin{\mathtt{dist}})$ is complete and connected, from the Hopf-Rinow Theorem \cite[Theorem 7.7]{Boothby1986}, for each $x_i,x_i^\ast\in\mathbb{R}^{n_i}$ there exists a solution to the optimization problem \eqref{eq:dist:i}. Thus, the geodesic connecting $x$ to $x^\ast$ is given by $c=(c_1,\ldots,c_N)$.

At each time $t\geq0$, and for every index $i\in\mathbb{N}_{[1,N]}$, compute the geodesic $c_{i,t}:[0,1]\to\mathbb{R}^n$ connecting the $i$-th component of the solutions $X(t^-,x^\ast,u^\ast)$ and $X(t^-,x,u)$, where $t^-:=\inf\{\mathbb{R}_{\geq0}\setminus[0,t)\}$. At every time $t\geq0$, each component $i\in\mathbb{N}_{[1,N]}$ of the variational feedback law \eqref{eq:differential feedback law} integrated along $c$ yields the function
\begin{align}\label{eq:feedback law}
		k_i(t)=u_i^\ast(t)-\int_0^1\tfrac{1}{2}\rho_i(\tilde{c}_{i,t}(s))B(c_{i,t}(s))^\top\nonumber\\  W(c_{i,t}(s))^{-1}c_{i,t}'&(s)\,ds\;,
\end{align}
where $\tilde{c}_{i,t}=(c_{i,t},\breve{c}_{i,t})$. Apply the feedback law $k=(k_1,\ldots,k_N)$ to system~\eqref{eq:network} and repeat the second step.

Since the origin is a globally exponentially stable equilibrium for system~\eqref{eq:delta network:CL}, Coppel's inequality (\cite[Theorem 1]{Hewer1974} see also \cite{Sontag2010}) implies that there exist constant values $C>0$ and $\lambda>0$ such that, for every $x\in\mathbb{R}^n$ the issuing solutions $X(\cdot,x,k)$ and $X(\cdot,x^\ast,u^\ast)$ to system \eqref{eq:network:CL} satisfy, for every $t\geq0$, the inequality \eqref{eq:contraction inequality}. This concludes the proof of Theorem~\ref{prop:distributed controller}.
\end{proof}

Note that, under the assumptions of Theorem~\ref{prop:distributed controller}, the matrices $W=\mathbin{\mathtt{diag}}(W_1,\ldots,W_N)$ and $R=\mathbin{\mathtt{diag}}(\rho_1I_{m_1},\ldots,\rho_NI_{m_N})$, where $I_{m_i}\in\mathbb{R}^{m_i\times m_i}$ is the identity matrix, satisfy the inequality
	\begin{align}
	 T(x):=-\partial_f M(x)+\tfrac{\partial f}{\partial x}(x)W(x)+W(x)\tfrac{\partial f}{\partial x}^\top (x)\nonumber\\
		-B(x)R(x)B(x)^\top+2\lambda W(x)\prec0,\label{eq:convex problem:separable rho}
	\end{align}
	for every $x\in\mathbb{R}^n$.
	
	To solve the matrix inequality \eqref{eq:convex problem:separable rho}, a large-scale  optimization algorithm dedicated to deal with spasity (see, for instance, \cite{Waki2006}) can be employed, as each component $i\in\mathbb{N}_{[1,N]}$ depends only on its neighbours. Also, scalable numerical methods can be used to deal with sparsity for sums of squares and semidefinite programming \cite{Ahmadi2014}.
	
	A consequence of Theorem~\ref{prop:distributed controller} is that, on compact sets, the matrix $R$ satisfying the matrix inequality \eqref{eq:convex problem:separable rho} is constant.

\begin{corollary}\label{prop:convex formulation}
	Assume that the hypotheses of Theorem~\ref{prop:distributed controller} hold for every compact set $\mathbf{S}\subset\mathbb{R}^n$. Then, for every solution $X_\mathbf{S}(\cdot,x^\ast,u^\ast)\in\mathbf{S}$ to system \eqref{eq:network}, there exist constants $C_\mathbf{S}$, $\lambda_\mathbf{S}$, and a feedback law $k_\mathbf{S}:\mathbb{R}^n\to\mathbb{R}^m$ such that 
	\begin{enumerate}
		\item For every initial condition $x\in\mathbf{S}$, inequality \eqref{eq:contraction inequality} holds;
		
		\item The $i$-th component, where $i\in\mathbb{N}_{[1,N]}$, of the feedback law $ k_\mathbf{S}$ depends only on the variable $x_i\in\mathbb{R}^{n_i}$.
	\end{enumerate}		
\end{corollary}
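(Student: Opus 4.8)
The plan is to re-run the argument of Theorem~\ref{prop:distributed controller} on the fixed compact set $\mathbf{S}$, the only new ingredient being that compactness lets one trade the state-dependent multipliers $\rho_i(\widetilde{x}_i)$ for constants $\bar{\rho}_i$; once $\rho_i$ no longer depends on the neighbour states, the synthesised feedback $k_{\mathbf{S},i}$ inherits dependence on $x_i$ alone, which is item~2. First I would fix $\mathbf{S}$ and, invoking completeness of each metric space $(\mathbb{R}^{n_i},\mathbin{\mathtt{dist}})$ and the Hopf--Rinow theorem exactly as in the proof above, pass to a compact set $\widehat{\mathbf{S}}\supset\mathbf{S}$ that contains every component geodesic joining two points of $\mathbf{S}$. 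By hypothesis the conclusions of Theorem~\ref{prop:distributed controller} are available on $\widehat{\mathbf{S}}$, so one obtains smooth $W_i$ and $\rho_i$ with $T(x)\prec0$ on $\widehat{\mathbf{S}}$ together with the identity \eqref{eq:invariant vector:convex}.

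The key step is the ``constantisation'' of the multipliers. As $x$ ranges over the compact set $\widehat{\mathbf{S}}$, each argument $\widetilde{x}_i=(x_i,\breve{x}_i)$ ranges over a compact subset of $\mathbb{R}^{n_i+\breve{n}_i}$, so by continuity $\bar{\rho}_i:=\max_{x\in\widehat{\mathbf{S}}}\rho_i(\widetilde{x}_i)$ is finite. Writing $T^{\bar{\rho}}$ for the matrix obtained from $T$ by replacing each $\rho_i(\widetilde{x}_i)$ by $\bar{\rho}_i$, one has
\begin{equation*}
  T^{\bar{\rho}}(x)=T(x)-\mathbin{\mathtt{diag}}\big((\bar{\rho}_1-\rho_1(\widetilde{x}_1))B_1B_1^\top,\ldots,(\bar{\rho}_N-\rho_N(\widetilde{x}_N))B_NB_N^\top\big),
\end{equation*}
and since each block $(\bar{\rho}_i-\rho_i(\widetilde{x}_i))B_iB_i^\top$ is positive semidefinite, $T^{\bar{\rho}}(x)\preceq T(x)\prec0$ for every $x\in\widehat{\mathbf{S}}$. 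Hence the \emph{constant} matrix $R_{\mathbf{S}}=\mathbin{\mathtt{diag}}(\bar{\rho}_1 I_{m_1},\ldots,\bar{\rho}_N I_{m_N})$ still satisfies \eqref{eq:convex problem:separable rho} on $\widehat{\mathbf{S}}$ with the same $W$; this is precisely the consequence announced before the statement, and it is the crux of the corollary. The algebraic identity \eqref{eq:invariant vector:convex} is left untouched because it involves only $W$.

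With constant multipliers in hand, I would conclude as in the Theorem. The differential feedback \eqref{eq:differential feedback law} becomes, componentwise, $\delta_{k,i}(x_i,\delta_{x,i})=-\tfrac{\bar{\rho}_i}{2}B_i(x_i)^\top W_i(x_i)^{-1}\delta_{x,i}$, using that $B$ and, by sum-separability, $W=\mathbin{\mathtt{diag}}(W_1,\ldots,W_N)$ are block diagonal; integrating it along the $i$-th geodesic $c_{i,t}$ — which by \eqref{eq:dist:i} depends only on $x_i^\ast$ and $x_i$ — gives
\begin{equation*}
  k_{\mathbf{S},i}(t)=u_i^\ast(t)-\int_0^1\tfrac{\bar{\rho}_i}{2}B_i(c_{i,t}(s))^\top W_i(c_{i,t}(s))^{-1}c_{i,t}'(s)\,ds,
\end{equation*}
a function of $x_i$ (and of the reference signal) only, which is item~2. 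Item~1 then follows verbatim from the Theorem's argument: $T^{\bar{\rho}}(x)\prec0$ makes the origin globally exponentially stable for the associated differential closed loop on $\widehat{\mathbf{S}}$, and Coppel's inequality yields constants $C_{\mathbf{S}},\lambda_{\mathbf{S}}$ for which \eqref{eq:contraction inequality} holds for all $x\in\mathbf{S}$.

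The main obstacle I anticipate is the bookkeeping around the integration domain: the geodesic $c_{i,t}$ joining components of two trajectories in $\mathbf{S}$ need not remain inside $\mathbf{S}$, so $\bar{\rho}_i$ and the sign condition $T^{\bar{\rho}}\prec0$ must be established on the enlarged compact set $\widehat{\mathbf{S}}$, and one must check that the trajectories together with their geodesic interpolants stay there — contraction toward $X(\cdot,x^\ast,u^\ast)\in\mathbf{S}$ makes this plausible but it has to be argued carefully, or side-stepped by using, as the hypothesis allows, the Theorem's conditions on all of $\mathbb{R}^n$ while only invoking them on compacta. A minor point is confirming that passing to the maximum is legitimate, i.e. that $\rho_i$ is continuous (it is smooth by assumption) and that $\widetilde{x}_i$ indeed traces a compact set, which is immediate from continuity of the coordinate projections.
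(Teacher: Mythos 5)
Your proposal follows the same route as the paper's own proof: take $\bar{\rho}_i=\sup\{\rho_i(\widetilde{x}_i):x\in\mathbf{S}\}$ (finite by compactness), observe that replacing $\rho_i$ by a larger constant only makes $T$ more negative since $B_iB_i^\top\succeq0$, so \eqref{eq:convex problem:separable rho} still holds with a constant multiplier matrix, and then rerun the argument of Theorem~\ref{prop:distributed controller} to get Coppel-type constants $C_\mathbf{S},\lambda_\mathbf{S}$. Your extra details --- the explicit semidefiniteness comparison $T^{\bar{\rho}}\preceq T\prec0$, the enlarged compact set containing the geodesics, and the componentwise formula showing $k_{\mathbf{S},i}$ depends only on $x_i$ --- are correct refinements of steps the paper leaves implicit, not a different method.
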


\begin{proof}(of Corollary~\ref{prop:convex formulation})
Let $\bar{\rho}_i=\sup\{\rho_i(x_i,\breve{x}_i):x\in\mathbf{S}\}$ which exists because $\mathbf{S}$ is compact. Since inequality \eqref{eq:convex problem:separable rho} holds on every compact set $\mathbf{S}$, this implies that it is also satisfied with the matrix $\overline{R}=\max_{i=1,\ldots,N}\{\overline{\rho}_i\}\mathbin{\mathtt{diag}}(I_{m_1},\ldots,I_{m_N})$. Consequently the set of inequalities \eqref{eq:Artstein-Sontag} and the identity \eqref{eq:invariant vector} hold. Thus, the origin is a locally asymptotically stable equilibrium for system \eqref{eq:delta network:CL}.

The remainder of the proof is parallel to the proof of Theorem~\ref{prop:distributed controller}.
	\end{proof}
\section{Illustration}\label{sec:illustration}

Consider the network described by the undirected graph shown in Figure~\ref{fig:graph}.

\begin{figure}[htbp!]
	\centering
	\resizebox{0.6\linewidth}{!}{\input{./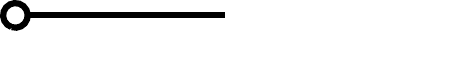_tex}}
	\caption{Graph describing the network structure of the example considered.}
	\label{fig:graph}
\end{figure}

The dynamics of each subsystem $i\in\mathbb{N}_{[1,3]}$ is given by
\begin{equation}\label{eq:example:subsytem}
	\scalebox{0.9}{$\left\{\begin{array}{rcl}
		\dot{x}_i&=&\ -x_i+z_i-1\times10^{-3}\left(x_i - \sum_{j\in\mathscr{N}(i)}x_j\right)\\
	\dot{y}_i&=&\ x_i^2 - y_i^3 -2x_iz_i+z_i\\
	\dot{z}_i&=&\ -y_i+u_i
	\end{array}\right.$}
\end{equation}
which corresponds to the system described in \cite{Andrieu2010} and interconnected using the variable $x$.

Let $q_i:=(x_i,y_i,z_i)$. Employing the optimization toolbox Yalmip \cite{Loefberg2004,Loefberg2009} for Matlab and using Mosek to solve the matrix inequality \eqref{eq:convex problem:separable rho}. The components of the matrix $W(q)=\mathbin{\mathtt{diag}}(W_1(q_1),W_2(q_2),W_3(q_3))$ are shown in the set of equations \eqref{eq:example:components of W}.

\begin{subequations}\label{eq:example:components of W}
\begin{equation}
		W_1(q_1)=\scalebox{0.8}{$\begin{bmatrix}
 0.06 & -0.11 x_1-0.01 & -0.16 \\
 * & 0.22 x_1^2+0.05 x_1+2.61 & 0.32 x_1+0.28 \\
 * & * & 0.89 \\
		\end{bmatrix},$}
\end{equation}

\begin{equation}
	W_2(q_2)=\scalebox{0.8}{$\begin{bmatrix}
 0.04 & -0.07 x_2-0.01 & -0.15 \\
 * & 0.15 x_2^2+0.05x_2+3.17 & 0.29x_2+0.29 \\
 * & * & 0.87 \\
	\end{bmatrix},$}
\end{equation}

\begin{equation}
	W_3(q_3)=\scalebox{0.8}{$\begin{bmatrix}
 0.09 & -0.19 x_3-0.01 & -0.23 \\
 *    & 0.38 x_3^2+0.04x_3+2.86 & 0.46 x_3+0.34 \\
 *    & * & 1.09 \\
	\end{bmatrix}.$}
\end{equation}
\end{subequations}

To satisfy the identity \eqref{eq:invariant vector:convex}, the matrices $W_i$ do not depend on the element $z_i$. 

The components of the matrix $R(q)=\mathbin{\mathtt{diag}}(\rho_1(q_1,q_2),\rho_2(q),\rho_3(q_2,q_3))$ are shown in the set of equations \eqref{eq:example:components of R}.

\begin{subequations}\label{eq:example:components of R}
\begin{align}
	\rho_1(q_1,q_2)=3.04 x_1^2-0.00290 x_1 x_2-0.0837 x_1y_1\nonumber\\
	-0.0406 x_1 z_1-0.252 x_1+2.75 x_2^2+2.77 y_1^2\nonumber\\
	+0.0301 y_1 z_1+0.0725 y_1+2.75 y_2^2+2.75 z_1^2\nonumber\\
	+0.112 z_1+2.75 z_2^2+5.39,
\end{align}

\begin{align}
	\rho_2(q)=3.78 x_1^2-0.00220 x_1 x_2+4.08 x_2^2\nonumber\\
	-0.00220 x_2 x_3-0.0569 x_2 y_2-0.0258 x_2 z_2\nonumber\\
	-0.321 x_2+3.78 x_3^2+3.78 y_1^2+3.79 y_2^2\nonumber\\
	+0.0162 y_2 z_2+0.0666 y_2+3.78 y_3^2+3.78 z_1^2\nonumber\\
	+3.78 z_2^2+0.0999 z_2+3.78 z_3^2+6.84,
\end{align}

\begin{align}
	\rho_3(q_2,q_3)=2.75 x_2^2-0.00290 x_2 x_3+3.04 x_3^2\nonumber\\
	-0.0837 x_3 y_3-0.0406 x_3 z_3-0.252 x_3+2.75 y_2^2\nonumber\\
	+2.77 y_3^2+0.0301 y_3 z_3+0.0725 y_3+2.75 z_2^2\nonumber\\
	+2.75 z_3^2+0.112 z_3+5.39.
\end{align}
\end{subequations}

Note that, although $W$ is not bounded as in inequality \eqref{eq:metric matrix}, it still satisfy conditions for the existence of geodesic curves. To see this claim, the Hopf-Rinow theorem states if a geodesic between two points can be extended on the whole domain, then there exist a geodesics between any two points. Note that there exists matrices $F$ and $G$ such that the maximum eigenvalue of $W$ satisfy the inequality $\lambda_{\max}(W(q))\leq|Fq+G|^2$, for every $q\in\mathbb{R}^9$. From the definition  $M=W^{-1}$, the minimum eigenvalue of $M$ satisfies the inequality $\lambda_{\min}(M(q))\geq |Fq+G|^{-2}$, for every $q\in\mathbb{R}^9$. Take any geodesic $c$, it is the solution to the equation $(c')^\top M(c)c'=o$, where $o$ is a constant value. From the inequality of the eigenvalue, the bound $|c'|\leq o|Fc+G|$ holds and $c$ ``grows'' exponentially, in the worst-case scenario. Thus, $c$ exists for all values of its argument and Hopf-Rinow theorem ensures the existence of a geodesic between any two points or $\mathbb{R}^9$.

\section{Conclusion \& Perspectives}\label{sec:conclusion}

In this work, the authors have shown how to employ contraction metrics to design a scalable distributed controller. The contributions of this paper are two-fold. A constructive method to design a distributed feedback law, and a distributed convex-optimization problem to compute the contraction metric with diagonal (separable) structure. Note also this approach can be applied to system described by less regular functions.

In a future work, the authors aim to extend this approach to functions contraction metrics with different structures such as maximization. The authors also aim to find conditions for the existence of the matrix $M$ satisfying diagonal and other structure constraints. In addition to that, the authors also intend to improve the controller design framework to take into account robustness and saturation constraints.

\paragraph*{Acknowledgments}

The authors gratefully acknowledge the contribution of Australian Research Council and the comments of anonymous reviewers.


\printbibliography

\end{document}